%% file: Filter_higher_relative_degree.tex
\documentclass[5p,preprint]{elsarticle}

\usepackage
{
    graphicx,
    amssymb,
    amsmath,
    amsthm,
    dsfont, 
    xcolor,
    mathtools,
    nicefrac,
    array,
    enumerate,
    color
}

\usepackage[notrig]{physics}
\usepackage[utf8]{inputenc}
\usepackage{todonotes}

\usepackage[pdffitwindow=false,
            plainpages=false,
            pdfpagelabels=true,
            pdfpagemode=UseOutlines,
            pdfpagelayout=SinglePage,
            bookmarks=false,
            colorlinks=true,
            hyperfootnotes=false,
            linkcolor=blue,
            urlcolor=blue!30!black,
            citecolor=green!50!black]{hyperref}

\usepackage[bf,normal]{caption}

\usepackage{natbib}        
\usepackage[svgnames]{xcolor}
\usepackage[shortlabels]{enumitem}
\usepackage{float}
\usepackage{caption}
\usepackage{subcaption}
\usepackage{tikz}
\usepackage[expansion=false]{microtype}
\usepackage{url}
\usepackage{comment}

\input{commands}

\newtheorem{thm}{Theorem}

\newtheorem{remark}[thm]{Remark}

\newcommand{\ddt}{\tfrac{\text{\normalfont d}}{\text{\normalfont d}t}}
\newcommand{\dds}{\tfrac{\text{\normalfont d}}{\text{\normalfont d}s}}
\allowdisplaybreaks

\begin{document}

\begin{frontmatter}

\title{Funnel control with input filter for nonlinear systems with arbitrary relative degree\tnoteref{funding}}

\tnotetext[funding]{This work was funded by the Deutsche Forschungsgemeinschaft (DFG, German Research Foundation)~-- Project-ID 524064985.}
\author[a]{Janina Schaa}\ead{janina.schaa@mathematik.uni-halle.de}
\author[a]{Thomas Berger}\ead{thomas.berger@mathematik.uni-halle.de}
\address[a]{Institut f\"ur Mathematik, Martin-Luther-Universit\"at Halle-Wittenberg, Halle (Saale), Germany}

\begin{keyword}
Funnel control, output reference tracking, robust control, nonlinear systems, prescribed performance
\end{keyword}

\begin{abstract}
This paper addresses output reference tracking with prescribed transient performance for unknown nonlinear multi-input multi-output systems with arbitrary relative degree. We propose a novel derivative-free extension of funnel control based on a collection of filter variables that estimate the output derivatives. The resulting controller ensures that the tracking error evolves within prescribed performance bounds, while avoiding differentiation of the output signal and maintaining a simple structure with only a small number of tuning parameters. The effectiveness of the proposed approach is illustrated by a numerical example.
\end{abstract}

\end{frontmatter}

\section{Introduction} \label{sec:introduction}
In many control applications, achieving output reference tracking with prescribed transient performance for uncertain nonlinear systems is a challenging task. Funnel control, introduced in~\cite{Ilchmann2002}, provides an adaptive control framework to address this problem for a broad class of nonlinear multi-input multi-output systems via high-gain feedback, see, e.g.,~\cite{SurveyBerger}. By enforcing the tracking error to evolve within a chosen performance funnel, funnel controllers ensure transient behavior under comparatively mild structural assumptions on the system. As a relative of funnel control, prescribed performance control (see e.g.~\cite{BechRovi14}) is similar in scope and design.

Due to their simple structure, funnel controllers have been shown to be an appropriate choice for many real-world applications, including DC-link power flow control~\cite{Senfelds2014}, speed control of wind turbine systems~\cite{Hackl2015Wind, Hackl2017}, control of peak inspiratory pressure~\cite{Pomprapa2015}, control of industrial servo-systems~\cite{Ilchmann2009, Hackl2013, Hackl2017} and voltage and current control of electrical circuits~\cite{Berger2014}.

Despite these advantages, several challenges concerning the practical application of funnel controllers have been investigated in recent research, such as the treatment of input constraints~\cite{Hu2022, Berger2024}, discrete-time systems~\cite{Cheng2023} and output measurement losses~\cite{Berger2023}. A persistent issue, however, is the application of funnel controllers to systems with higher relative degree. Classical funnel control schemes like the approach discussed in~\cite{Berger2018} require the availability of the output derivatives, which are generally not accessible in practice, thereby limiting the practical implementability.

This issue has been discussed in several works in the literature. For single-input single-output systems, solutions have been proposed in~\cite{Miller1991} for the linear time-invariant case, and in~\cite{ChowdhuryKhalil2017} by the introduction of a virtual output, whose parameters depend on the system dynamics, thereby yielding a design that is not model-free. A filter-based extension has been considered in~\cite{Ilchmann2007}, resulting in a complex controller structure including high powers of a potentially large gain function.

Another approach is based on the introduction of a pre-compensator, proposed in~\cite{BergerReis18} (and with a proof for the general case given in~\cite{Lanz22}), in order to estimate the required derivative signals. However, this approach exhibits high complexity due to a large number of auxiliary variables, in particular for systems of higher relative degree. Further, derivative-free funnel control based on sampled data has been investigated for systems of relative degree two, see~\cite{Lanza2024}. However, this approach requires an appropriate choice of the sampling time, which depends on system-specific properties.

In this paper, we propose a novel extension of funnel control utilizing a collection of filter signals approximating the output derivatives, in order to achieve output tracking with prescribed performance for nonlinear systems of arbitrary relative degree without requiring access to the output derivatives, thereby avoiding numerical differentiation of the output signal. The proposed approach extends previous results for systems of relative degree two, see~\cite{Dennstaedt2025}. In contrast to the derivative-free design based on a cascade of pre-compensators introduced in~\cite{BergerReis18}, this approach reduces the number of auxiliary dynamics and hence leads to a lower-dimensional closed-loop system, and involves fewer tuning parameters.

The remainder of this paper is organized as follows. In Section~2, we introduce the system class and the control objective. Section~3 presents the controller design and establishes the main result. A simulation example is discussed in Section~4, followed by concluding remarks in Section~5. 

\subsection{Nomenclature:}
$\N$ and $\R$ denote the natural and real numbers, respectively, $\N_0 \coloneq \N \cup \{ 0 \}$ and $\R_{\geq 0} \coloneq [0,\infty)$. $\Norm{x}$ denotes the Eucledian norm of $x\in \R^n$. $\cC^k(U,\R^n)$ is the linear space of $k$-times continuously differentiable functions $f:U \mapsto \R^n$, where $k \in \N_0 \cup \{ \infty \}$ and $U \subseteq \R^m$. $L^{\infty}(I,\R^n)$ denotes the space of measurable and essentially bounded functions $f:I\rightarrow \R^n$ for some interval $I \subseteq \R$, equipped with the norm $\Vert f \Vert_{\infty} \coloneq \operatorname{ess} \sup_{t\in I} \Norm{f(t)}$. The set of measurable and locally essentially bounded functions is denoted by $L^{\infty}_{\text{loc}}(I,\R^n)$. The Sobolev space of all $k$-times weakly differentiable functions $f:I\rightarrow \R^n$ such that $f,\hdots,f^{(k)} \in L^{\infty}(I,\R^n)$, for some $k\in \N$, is $W^{k,\infty}(I,\R^n)$.

\section{Problem formulation} \label{sec:ProbForm}

\subsection{System class}

Consider a multi-input multi-output system governed by the differential equation
\begin{equation} \label{eq:System}
\begin{aligned}
y^{(r)}(t) &= \sum_{i = 1}^{r-1}R_i y^{(i)}(t) + f\big(\oT(y,\hdots,y^{(r-1)})(t) \big) + \Gamma u(t), \\
y|_{[0,t_0]} &= y^0  \in \cC^{r-1}([0,t_0],\R^n),
\end{aligned}
\end{equation}
of order $r \geq 2$, where $t_0 \geq 0$, $R_i \in \R^{n \times n}$ are matrices, $\Gamma \in \R^{n\times n}$ has a positive definite symmetric part $\frac{1}{2}(\Gamma + \Gamma^{\top})$, $u\in L_{\text{loc}}^{\infty}(\R_{\geq 0},\R^n)$ is a control input, and $y(t)\in \R^n$ at time $t \geq 0$. Further, $y^0\in \cC^{r-1}([0,t_0],\R^n)$ is an initial trajectory, $f\in\cC(\R^q,\R^n)$ is a nonlinear function, and ${\oT:\cC(\R_{\geq 0},\R^n)^{r}\to L^\infty_{\loc}([t_0,\infty),\R^q)}$ is a nonlinear operator satisfying the following conditions:
\begin{enumerate}[(i)]
 \item\emph{Causality}:  $\fa y_1,y_2\in\cC(\R_{\geq 0},\R^n)^r$  $\fa t\geq t_0$:
    \[
        y_1\vert_{[0,t]} = y_2\vert_{[0,t]}
        \quad \Impl\quad
        \oT(y_1)\vert_{[t_0,t]}=\oT(y_2)\vert_{[t_0,t]}.
    \]
    \item\emph{Local Lipschitz}: 
    $\fa t \ge t_0 $ $\fa y \in \cC([0,t], \R^n)^r$ 
    ${\ex \Delta, \delta, c > 0}$ 
    $\fa y_1, y_2 \in \cC(\Rp, \R^n)^r$ with
    $y_1|_{[0,t]} = y_2|_{[0,t]} = y $ 
    and $\Norm{y_1(s) - y(t)} < \delta$,  $\Norm{y_2(s) - y(t)} < \delta $ for all $s \in [t,t+\Delta]$:
    \[
     \hspace*{-2mm}   \esssup_{\mathclap{s \in [t,t+\Delta]}}  \Norm{\oT(y_1)(s) \!-\! \oT(y_2)(s) }  
        \!\le\! c \ \sup_{\mathclap{s \in [t,t+\Delta]}}\ \Norm{y_1(s)\!-\! y_2(s)}\!.
    \] 
    \item\label{Item:PropBIBO}\emph{Bounded-input bounded-output (BIBO)}:\\
    $\fa c_0 > 0$ $\ex c_1>0$  $\fa y_1,\hdots,y_r \in \cC(\Rp, \R^n)$:
    \begin{multline*}
    \sup_{t \in \R_{\geq 0}} \Norm{y_1(t)} \le c_0 \\
    \Impl \ \esssup_{t \in [t_0,\infty)} \Norm{\textbf{T}(y_1,\hdots,y_r)(t)}  \le c_1.
    \end{multline*}
\end{enumerate}
The system explicitly contains the linear terms $\sum_{i = 1}^{r-1}R_i y^{(i)}(t)$, since they are excluded by the BIBO property~(iii) of operator $\oT$.
\begin{remark}
Observe that property~(iii) of the operator~$\oT$ ensures that a bounded system output $y$ is sufficient to conclude that the internal dynamics of the system, which are modeled by the operator $\oT$, remain bounded. This is a stronger assumption than the BIBO property used in classical funnel control, where the output derivatives are assumed to be available, see \cite{Berger2018, Berger2021}.
\end{remark}

\subsection{Control objective} \label{Ssec:ContrObj}

The objective is to design an output feedback control law, which achieves that, for any reference signal $y_{\text{ref}}\in W^{1,\infty}([t_0,\infty),\R^n)$, the output tracking error $e(t) \coloneq y(t) - y_{\text{ref}}(t)$ evolves within a prescribed performance funnel
\begin{equation} \label{Funnel}
\cF_{\varphi} \coloneq \left\{ (t,e) \in [t_0,\infty) \times \R^n \mid \varphi(t) \Norm{e} < 1  \right\},
\end{equation}
determined by the (user-defined) choice of the function $\varphi \in W^{1,\infty}([t_0,\infty), \R)$ such that $\inf_{t\geq t_0}\varphi(t) > 0$.

\section{Controller design} \label{sec:ContDesign}
We choose the following output feedback control law
\begin{equation}\label{eq:Controller}
\boxed{
\begin{aligned}
	e(t) &=y(t)-y_{\text{ref}}(t),\\
	\dot{\xi}_{r-1}(t) &= -\xi_{r-1}(t) + u(t),	\\
	\dot{\xi}_{i}(t) &= -(r-i)\xi_i(t) + \xi_{i+1}(t), &\fa 1 \leq i < r-1,\\
	\theta_1(t) &= \xi_1(t) - \frac{-e(t)}{1-\varphi(t)^2\Norm{e(t)}^2},\\
	\theta_i(t) &= \xi_i(t) - \frac{-\theta_{i-1}(t)}{\hat{\theta}_{i-1}^2 - \Norm{\theta_{i-1}(t)}^2},& \fa 2 \leq i \le r-1,\\
	u(t) &= \frac{-\vartheta \theta_{r-1}(t)}{\hat{\theta}_{r-1}^2 - \Norm{\theta_{r-1}(t)}^2}, & \vartheta > 0
\end{aligned}
}    
\end{equation}
with constant initial conditions $ \xi_i(t) = \xi_i^0$ for all $0\leq t\leq t_0$ and parameters $\vartheta > 0$ and $ \hat{\theta}_{i} > 0$ for all $1 \leq i \leq r-1$.

Observe that this controller does not use the derivatives $\dot{y}, \hdots, y^{(r-1)}$ of the output and $\dot{y}_{\text{ref}}, \hdots, y^{(r-1)}_{\text{ref}}$ of the reference trajectory. Instead, we introduce $r-1$ filter variables $\xi_1, \hdots, \xi_{r-1}$, which are qualitatively reproducing the signals $\dot{y}, \hdots, y^{(r-1)} $, respectively. The control law $u$ is then designed to ensure that the filter variable $\xi_1$ tracks the reference signal $ \frac{-e(t)}{1-\varphi(t)^2\Norm{e(t)}^2}$, which corresponds to the funnel control law for systems of relative degree one, except for an error $\theta_1$, which will remain bounded by some constant $\hat{\theta}_1$, due to the iterative construction of constant funnel bounds for the error signals $\theta_i$. The controller has $r$ design variables, $\hat{\theta}_1, \hdots, \hat{\theta}_{r-1}$ and $\vartheta$. Moreover, one needs to choose the initial values $\xi_i^0$. Observe that all parameters can be selected without system knowledge, except for the initial system output $y^0(t_0)$. 

A function $(x,\xi):[0,\omega) \rightarrow \R^{rn}\times \R^{(r-1)n}$, $\omega \in (0,\infty],$ is called a solution of the closed loop system~\eqref{eq:System},~\eqref{eq:Controller}, if it satisfies the initial conditions $(x_1(t),\xi_1(t),\hdots,\xi_{r-1}(t)) = (y^0(t), \xi_1^0,\hdots,\xi_{r-1}^0)$ for each $t \in [0,t_0],$ and $(x,\xi)_{\vert [t_0,\omega)}$ is absolutely continuous and satisfies
\begin{align*}
\dot{x}_i(t) &= x_{i+1}(t), \quad \fa 1 \leq i \leq r-1\\
\dot{x}_r(t) &=  \sum_{i=1}^{r-1} R_i x_{i+1}(t) + f\big(\oT(x_1,\hdots, x_{r})(t) \big) + \Gamma u(t)\\
\dot{\xi}_i(t) &= -(r-i)\xi_i(t) + \xi_{i+1}(t), \quad \fa 1 \leq i < r-1\\ 
\dot{\xi}_{r-1}(t) &= -\xi_{r-1}(t) + u(t), 
\end{align*}
with $u$ as defined in~\eqref{eq:Controller} for almost all $t \in [t_0,\omega)$. We call a solution maximal, if it does not have a right extension, which is also a solution.

\begin{thm}
Consider the system~\eqref{eq:System}, choose a function $\varphi \in W^{1,\infty}([t_0,\infty), \R)$ such that $\inf_{t\geq t_0}\varphi(t) > 0$, and a reference trajectory $y_{\rm{ref}}\in W^{1,\infty}([t_0,\infty),\R^n)$. Choose parameters $k, \hat{\theta}_i > 0$ for each $1 \leq i \leq r-1$, and initial values $y^0 \in\cC^{r-1}([0,t_0],\R^n) $ and $ \xi_i^0 \in \R^n$ for each $1 \leq i \leq r-1$, such that the following conditions are satisfied.
\begin{itemize}
\item $\varphi(t_0) \Norm{y^0(t_0) - y_{\rm{ref}}(t_0)} < 1$,
\item $\theta_{1}^0 \coloneq \xi_1^0 - \frac{-(y^0(t_0) - y_{\rm{ref}}(t_0))}{1-\varphi(t_0)^2 \Norm{y^0(t_0) - y_{\rm{ref}}(t_0)}^2}$ satisfies $\left\Vert \theta_1^0  \right\Vert < \hat{\theta}_1 $,
\item $\theta_i^0 \coloneq \xi_{i}^0 - \frac{-\theta_{i-1}}{\hat{\theta}_{i-1}^2 - \Norm{\theta_{i-1}}^2}$ satisfies $\Norm{\theta_i^0} < \hat{\theta}_{i}$ for each $2 \leq i \leq r-1$.
\end{itemize}
Then, the application of the controller~\eqref{eq:Controller} to the system~\eqref{eq:System} yields an initial-value problem which has a solution, and every maximal solution $(x,\xi):[0,\omega) \rightarrow \R^{rn} \times \R^{(r-1)n}$, $\omega \in (0,\infty]$, has the following properties:
\begin{itemize}
\item The solution is global, i.e., $\omega = \infty$.
\item The tracking error $e = x_1 - y_{\rm{ref}}$ evolves uniformly within the performance funnel $\cF_{\varphi},$ i.e.,
$$ \exists\, \varepsilon \in (0,1) \ \forall\, t \geq t_0 : \quad \varphi(t) \Norm{e(t)} < \varepsilon. $$
\item The input signal and filter variables $u, \xi_i :[t_0,\infty) \rightarrow \R^n$ are bounded for $1\le i\le r-1$.
\end{itemize}
\end{thm}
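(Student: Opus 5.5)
The plan is the standard funnel-control scheme: local existence on an open set, then a bootstrap on the maximal interval $[0,\omega)$ which shows that all error variables stay uniformly away from their bounds, which in turn forces $\omega=\infty$. I treat the feedback gain $k$ in the statement as the parameter $\vartheta$ of~\eqref{eq:Controller}.

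\textbf{Step 1 (existence).} Write the closed loop as a functional differential equation in $(x,\xi)$ on the relatively open set
\[
\mathcal D=\bigl\{(t,x,\xi): \varphi(t)\Norm{x_1-y_{\rm ref}(t)}<1,\ \Norm{\theta_i}<\hat\theta_i,\ 1\le i\le r-1\bigr\}.
\]
Here each $\theta_i$ is the explicit continuous function of $(t,x_1,\xi_1,\dots,\xi_i)$ defined recursively in~\eqref{eq:Controller}. The initial conditions place $(t_0,x(t_0),\xi(t_0))$ in $\mathcal D$. The right-hand side is locally Lipschitz in the state, and $\oT$ is causal and locally Lipschitz. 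A standard existence result for such equations (as used in~\cite{Berger2018, Ilchmann2007}) therefore yields a solution. Every maximal solution $(x,\xi)$ on $[0,\omega)$ satisfies $\Norm{\theta_i(t)}<\hat\theta_i$ for all $t\in[t_0,\omega)$. Moreover, if $\omega<\infty$, then the graph of the solution has no compact closure in $\mathcal D$. It therefore suffices to show that $\varphi\Norm{e}\le\varepsilon<1$ and $\Norm{\theta_i}\le\hat\theta_i-\delta_i$ on $[t_0,\omega)$, and that all signals are bounded there.

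\textbf{Step 2 (key identity: the filter reproduces $\dot y$).} This is the step I expect to be the main obstacle. From $u$ to $\xi_1$ the filter has transfer function $1/\prod_{j=1}^{r-1}(s+j)$, which has relative degree $r-1$. Let $\Xi$ denote the solution of the same filter driven by $y^{(r)}-\sum_i R_i y^{(i)}-f(\oT(\cdot))$ in place of $\Gamma u$. Then $\Gamma\xi_1-\Xi$ is an exponentially decaying term coming from the initial data.
\begin{itemize}
\item Integrating by parts $r-1$ times in the variation-of-constants formula, the filtered $y^{(r)}$ equals $\dot y$ plus a linear combination of stable filterings of $y$, plus decaying terms.
\item The same manipulation applies to each $R_i y^{(i)}$, $i\le r-1$. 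Because the filter has relative degree $r-1\ge i$, each such term becomes a bounded combination of $y$ and of exponentially weighted integrals of $y$.
\end{itemize}
This yields
\[
\dot e(t)=\Gamma\xi_1(t)+g(t),
\]
where $g$ collects $-\dot y_{\rm ref}$, the filtered $f(\oT)$, and the filtered $y$-terms. Since $e$ stays in the funnel and $y_{\rm ref}$ is bounded, $y$ is bounded on $[t_0,\omega)$. Property~(iii) of $\oT$ then bounds $f(\oT(\cdot))$, and hence $\Norm{g}_\infty<\infty$ independently of $\omega$. The work here is bookkeeping of the boundary terms in the integrations by parts. This is exactly where the stronger BIBO assumption~(iii) on $\oT$ is needed, because only boundedness of $y$ is available, not of its derivatives.

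\textbf{Step 3 (bootstrap, going up the chain).} First, $\Norm{\theta_1}<\hat\theta_1$ gives
\[
\xi_1=\theta_1-\frac{e}{1-\varphi^2\Norm{e}^2}.
\]
Substituting this into $\dot e=\Gamma\xi_1+g$ and using the positive definite symmetric part of $\Gamma$, the usual relative-degree-one funnel argument applies. One differentiates $\tfrac12\varphi^2\Norm{e}^2$ and uses $\varphi\in W^{1,\infty}$ together with boundedness of $\theta_1$, $g$ and $\dot\varphi$. This gives $\varepsilon\in(0,1)$ with $\varphi\Norm{e}\le\varepsilon$.

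Consequently $\xi_1$ is bounded and $\dot e$ is bounded. Writing $N_1=-e/(1-\varphi^2\Norm{e}^2)$, the derivative $\dot N_1$ is then bounded as well. Next,
\[
\dot\theta_1=-(r-1)\xi_1+\theta_2-\frac{\theta_1}{\hat\theta_1^2-\Norm{\theta_1}^2}-\dot N_1.
\]
All terms except the third are bounded, and the third term strongly pushes $\theta_1$ inward. Testing with $\tfrac12\Norm{\theta_1}^2$ gives $\Norm{\theta_1}\le\hat\theta_1-\delta_1$. It follows that $\xi_2$ is bounded, and then $\dot\theta_1$ and $\dot N_2$ are bounded as well.

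Iterating this argument for $i=2,\dots,r-2$ gives the bounds $\Norm{\theta_i}\le\hat\theta_i-\delta_i$ one at a time. For the last error one has
\[
\dot\theta_{r-1}=-\xi_{r-1}+u-\dot N_{r-1},
\qquad u=-\vartheta\,\theta_{r-1}/(\hat\theta_{r-1}^2-\Norm{\theta_{r-1}}^2),
\]
and the same argument applies, with $\vartheta>0$ supplying the inward push. This shows that $u$ and all $\xi_i$ are bounded.

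\textbf{Step 4 (conclusion).} The solution now remains in a compact subset of $\mathcal D$, and $x$ is bounded on bounded intervals via the linear $R_i$-terms and the bounded $u$ and $f(\oT(\cdot))$. By Step~1 this forces $\omega=\infty$. The funnel bound with $\varepsilon$ and the boundedness of $u$ and of every $\xi_i$ then hold on all of $[t_0,\infty)$, which proves all three claims of the theorem.
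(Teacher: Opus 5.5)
Your argument is correct. Its skeleton matches the paper's Steps~1, 2, 4, 5 and~6: existence on the open set, boundedness of $\dot y-\Gamma\xi_1$, the relative-degree-one funnel estimate for $e$, the inward push on $\theta_1,\dots,\theta_{r-1}$ in order, and the compactness contradiction. The differences lie in two technical points.

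For the key identity, the paper stays in the time domain. It builds modal variables $\zeta_i$, which are linear combinations of $y^{(j)}$ and $\Gamma\xi_j$ satisfying $\dot\zeta_i=-i\zeta_i+f(\oT(\cdot))+(\text{terms in } y)$, so each $\zeta_i$ is bounded. It then picks $c$ in the kernel of a truncated Vandermonde matrix so that $\sum_i c_i\zeta_i$ cancels every $y^{(j)}$ and $\xi_j$ with $j\ge 2$, leaving $q(\dot y-\Gamma\xi_1)+A_0y$. This is a partial-fraction realization of your observation that $s^r/\prod_{j=1}^{r-1}(s+j)$ equals $s$ plus a proper stable transfer function.

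Your convolution route is shorter and explains why the construction works. It also shows that nothing hinges on the particular filter constants: any positive ones, even repeated, would do. The paper's coefficients $a_{i,j}$ and its Vandermonde step, by contrast, are tailored to the distinct poles $-1,\dots,-(r-1)$. In turn, the paper's version is fully explicit and leaves no boundary terms to track.

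At the end, you replace the paper's Step~3 (the combinations $Z_k$ used to bound each $y^{(k)}$) with a simpler observation. If $\omega<\infty$, $x$ solves a linear equation with bounded forcing $f(\oT(\cdot))+\Gamma u$ and is therefore bounded on $[t_0,\omega)$. This suffices for the compactness contradiction. The paper's route additionally yields bounded $y^{(k)}$ on $[t_0,\infty)$, which the theorem does not assert.

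There are two small slips:
\begin{enumerate}
\item Reducing the filtered $y^{(r)}$ to $\dot y$ plus filterings of $y$ takes $r$ integrations by parts, not $r-1$. After $r-1$ a filtering of $\dot y$ remains, and one more is needed since only $y$ is known to be bounded.
\item The closed-loop right-hand side is not locally Lipschitz, because $f$ is merely continuous. However, the Carath\'eodory-type result \cite[Thm.~B.1]{IlchRyan09} needs only continuity, and uniqueness is not claimed.
\end{enumerate}
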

\begin{proof}
\emph{Step 1:} We show that there exists a maximal solution $(x,\xi):[0,\omega) \rightarrow \R^{rn} \times \R^{(r-1)n}$, $\omega \in (0,\infty]$, of the closed loop system~\eqref{eq:System},\eqref{eq:Controller}. Define the open set
\begin{align*}
&\cE  \coloneq \\ &\left \{ \!(t,y,z) \!\in \![t_0,\!\infty)\! \times \!\R^{rn}\! \times\! \R^{(r-1)n} \middle\vert\! \!\begin{array}{l}  \varphi(t)\! \Norm{y_1 - y_{\text{ref}}(t)} \!<\! 1, \!\!\!\! \\  \left\Vert \theta^i \right\Vert < \hat{\theta}_{i}, \\ \fa 1 \leq i \leq r-1  \\  \end{array}  \right\}
\end{align*}
using the notation $\theta^1 \coloneq z_1 - \frac{-(y_1 - y_{\text{ref}}(t))}{1-\varphi(t)^2\Norm{ y_1 - y_{\text{ref}}(t) }},$ and $\theta^{i} \coloneq z_{i} - \frac{-\theta^{i-1}}{\hat{\theta}^2_{i-1} - \Norm{\theta^{i-1}}^2} $ for each $1 < i \leq r-1$.
Define the functions $g_1(t,y_1,z_1) = z_1 - \frac{-(y_1 - y_{\text{ref}}(t))}{1-\varphi(t)^2 \Norm{y_1 - y_{\text{ref}}(t)}^2}$ and for each $2\leq i \leq r-1$, $g_i(t,y_1,z_1,\hdots,z_i) = z_i - \frac{-g_{i-1}(t,y_1,z_1,\hdots,z_{i-1})}{\hat{\theta}_{i-1}^2 - \Norm{g_{i-1}(t,y_1,z_1,\hdots, z_{i-1})}^2}$, and $g_r(t,y_1,z_1,\hdots,z_{r-1}) = \frac{-\vartheta g_{r-1}(t,y_1,z_1,\hdots,z_{r-1})}{\hat{\theta}^2_{r-1} - \Norm{g_{r-1}(t,y_1,z_1,\hdots,z_{r-1})}^2}$. Further, introduce the function 
\[
  G(t,y,z) = \sum_{i=1}^{r-1}R_i y_{i+1}  +\Gamma g_r(t,y_1,z_1,\hdots,z_{r-1})
\]
in order to define the function $F:\cE \times \R^q\rightarrow \R^{(2r-1)n}$ by
\begin{equation*}
F(t,y,z,\eta) = \begin{pmatrix} y_2 \\ \vdots \\ y_r \\ G(t,y,z) + f(\eta) \\ -(r-1)z_1 + z_2 \\ -(r-2)z_2+z_3 \\ \vdots \\ -2z_{r-2}+z_{r-1} \\ -z_{r-1} + g_r(t,y_1,z_1,\hdots,z_{r-1}) \end{pmatrix}.
\end{equation*}
 Then the initial value problem~\eqref{eq:System},\eqref{eq:Controller} takes the form 
\begin{align*}
\begin{pmatrix} \dot{x}(t) \\ \dot{\xi}(t) \end{pmatrix} &= F(t,x(t),\xi(t),\oT(x)(t)), \\ 
(x,\xi)|_{[0,t_0]} &= (x^0,\xi^0),
\end{align*}
where, for $t\in [0,t_0]$,
\[
  x^0(t):= \begin{pmatrix}  y^0(t) \\ \vdots\\ (y^0)^{(r-1)}(t)\end{pmatrix},\quad \xi^0(t) := \begin{pmatrix} \xi_1^0 \\ \vdots \\ \xi_{r-1}^0 \end{pmatrix}.
 \]
By assumption, the initial values satisfy 
$$\big(t_0, x^0(t_0),\xi^0(t_0) \big) \in \cE.$$ 
The function $F$ is measurable in~$t$, continuous in~$(y,z,\eta)$ and locally essentially bounded. Therefore, an application of a variant of~\cite[Thm.~B.1]{IlchRyan09} yields the existence of a solution of~\eqref{eq:System},\eqref{eq:Controller} and every solution can be extended to a maximal solution. Furthermore, any maximal solution $(x,\xi):[0,\omega) \rightarrow \R^{rn} \times \R^{(r-1)n}$, $\omega\in(0,\infty]$, has the property that its graph is not a compact subset of $\cE$.

\emph{Step 2:} We show boundedness of the error $\dot{y} - \Gamma \xi_1$ on $[t_0,\omega)$. 
For each $1 \leq i \leq r-1$ and $r-i+1 \leq j \leq r-1$, define 
\begin{equation} \label{eq:def-aij}
a_{i,j} \coloneq \frac{(i-1)!}{(i-(r-j))!} = \prod_{k=1}^{r-1-j} (i-k) > 0,
\end{equation}
and observe that these constants satisfy 
\begin{equation} \label{eq:aij}
a_{i,j-1} + a_{i,j} (r-j) = i a_{i,j}
\end{equation}
for all $1 \leq i \leq r-1$ and $r-i+2 \leq j \leq r-1$.

Next, we recursively define a set of matrix-valued polynomials. Let $S_{r-2}(s) = -R_{r-1}\in \R[s]^{n\times n}$ be constant, and for each $1 \leq j \leq r-2$, define
\begin{equation} \label{def:S-indu}
S_{j-1}(s) = -R_j - s S_{j}(s)\in \R[s]^{n\times n}.
\end{equation}
Obviously, the polynomial $S_j(s)$ is of degree $r-2-j$, for each $0 \leq j \leq r-2$, and does not depend on time, since the matrices $R_i$ are constant. Denote $S_{r-1}(s) = 0 \in \R[s]^{n\times n}$. We further define a set of time-dependent matrices, using the notation
$$ S^{i}(t) \coloneq \sum_{j = 0}^{r-2} S_j(i) y^{(j)}(t) $$
for each $1 \leq i \leq r-1$ and $t\in [t_0,\omega)$, which satisfies the following differential equation
\begin{align*}
\dot{S}^{i}(t) + i S^{i}(t) &= \sum_{j = 0}^{r-2} S_j(i) y^{(j+1)}(t) + i \sum_{j = 0}^{r-2} S_j(i) y^{(j)}(t)\\
&= \sum_{j = 1}^{r-2} \left( S_{j-1}(i) +iS_j(i) \right) y^{(j)}(t)  \\
&\quad\quad+  S_{r-2}(i) y^{(r-1)}(t) + iS_0(i) y(t)\\
&\overset{\eqref{def:S-indu}}{=} \sum_{j = 1}^{r-2} -R_j y^{(j)}(t) - R_{r-1} y^{(r-1)}(t) +iS_0(i)y(t)\\
&= -\sum_{j = 1}^{r-1} R_j y^{(j)}(t) + iS_0(i)y(t).
\end{align*}
Hence, 
\begin{equation} \label{eq:S-i-property}
-iS^{i}(t) = \dot{S}^{i}(t) + \sum_{j=1}^{r-1} R_jy^{(j)}(t) - i S_{0}(i) y(t)
\end{equation}
holds for each $1 \leq i \leq r-1$ and $t\in [t_0,\omega)$. We define
\begin{equation} \label{def:zeta-i}
\begin{aligned}
\zeta_i(t) &\coloneq \sum_{j=r-i}^{r-1} (-1)^{r-1-j}\left(  i^{r-1-j} y^{(j)}(t) - a_{i,j} \Gamma \xi_j(t) \right) \\
&\quad+ S^{i}(t) + \sum_{j=i+1}^{r} (-i)^{j-1} y^{(r-j)}(t)
\end{aligned}
\end{equation}
for all $t\in[t_0,\omega)$ and each $1 \leq i \leq r-1$. Further, define the polynomials
\begin{align*}
\alpha_j(s) &= (-s)^{r-1-j} I_n + S_{j}(s),\\
\beta_j(s) &= (-1)^{r-j} \prod_{k=1}^{r-1-j} (s-k), 
\end{align*}
for $1 \leq j \leq r-1$, which are of degree $r-1-j$. Observe that we can represent each $\zeta_i(t)$ as
\begin{equation} \label{eq:zeta-i-poly}
\begin{aligned}
&\zeta_i(t) = \sum_{j=r-i}^{r-1} (-1)^{r-j} a_{i,j}\Gamma \xi_j(t) + \sum_{j=r-i}^{r-1} (-i)^{r-1-j} y^{(j)}(t)\\
&\qquad  + S^{i}(t) + \sum_{j=0}^{r-1-i} (-i)^{r-1-j} y^{(j)}(t)\\
&\overset{\eqref{eq:def-aij}}{=}  \sum_{j=r-i}^{r-1} \beta_j(i) \Gamma \xi_j(t) + \sum_{j=0}^{r-1} (-i)^{r-1-j} y^{(j)}(t) + \sum_{j=0}^{r-1} S_j(i)y^{(j)}(t) \\
&= \sum_{j = 0}^{r-1} \alpha_{j}(i) y^{(j)}(t) + \sum_{j= 1}^{r-1} \beta_j(i) \Gamma \xi_j(t).
\end{aligned}
\end{equation}

\emph{Step 2a:} We show that $\zeta_i$ remains bounded for each $1\leq i\leq r-1$ by utilizing a Gronwall argument. Observe that the following differential equation holds for each $1 \leq i \leq r-1$ and $t\in [t_0,\omega)$.
\begin{align}
&\dot{\zeta}_i(t) = \sum_{j = r-i}^{r-1} (-1)^{r-1-j}(i^{r-1-j} y^{(j+1)}(t) - a_{i,j} \Gamma \dot{\xi}_j(t)) \notag\\
&\qquad+ \dot{S}^{i}(t) + \sum_{j=i+1}^{r} (-i)^{j-1} y^{(r-j+1)}(t) \notag\\
&\overset{\eqref{eq:S-i-property}}{=}  y^{(r)}(t) + \sum_{j = r-i}^{r-2} (-1)^{r-1-j}(i^{r-1-j} y^{(j+1)}(t) - a_{i,j} \Gamma \dot{\xi}_j(t))\notag \\
&\quad -iS^{i}(t) - \sum_{j=1}^{r-1} R_jy^{(j)}(t) + \sum_{j=i+1}^{r} (-i)^{j-1} y^{(r-j+1)}(t)\notag\\
&\quad  - a_{i,r-1} \Gamma (-\xi_{r-1}(t) + u(t))+ iS_{0}(i) y(t) \notag\\
&\overset{\eqref{eq:System},\eqref{eq:Controller}}{=} \! f(\oT(y,\hdots,y^{(r-1)})(t)) + \sum_{j=1}^{r-1} R_jy^{(j)}(t) + \Gamma u(t) \notag\\
&\quad  + \!\!\sum_{j=i+1}^{r} \!(-i)^{j-1} y^{(r-j+1)}(t) -iS^{i}(t)  + iS_{0}(i) y(t)\notag\\
&\quad  \!- \sum_{j=1}^{r-1} R_jy^{(j)}(t)+ \!\!\! \sum_{j = r-i}^{r-2}\!\! (-1)^{r-1-j}\!\Big(i^{r-1-j} y^{(j+1)}(t)\notag\\
&\quad -\! a_{i,j} \Gamma (-(r-j)\xi_j(t) + \xi_{j+1}(t))\Big) -\Gamma u(t) +\Gamma \xi_{r-1}(t) \notag\\
&=  f(\oT(y,\hdots,y^{(r-1)})(t)) -iS^{i}(t) + iS_{0}(i) y(t) \notag\\
&\quad + \sum_{j = r-i}^{r-2} (-1)^{r-1-j}a_{i,j} \Gamma (r-j)\xi_j(t) \notag\\
&\quad + \sum_{j=i+1}^{r} (-i)^{j-1} y^{(r-j+1)}(t)+\Gamma \xi_{r-1}(t)  \notag\\
&\quad + \sum_{j = r-i}^{r-2} (-1)^{r-1-j}(i^{r-1-j} y^{(j+1)}(t) - a_{i,j} \Gamma \xi_{j+1}(t)) \notag\\
&=f(\oT(y,\hdots,y^{(r-1)})(t)) -iS^i(t)+ \sum_{j=i}^{r-1} (-i)^{j} y^{(r-j)}(t)\notag\\
&\quad + \sum_{j = r-i+1}^{r-1} (-1)^{r-j}(i^{r-j} y^{(j)}(t) - a_{i,j-1} \Gamma \xi_{j}(t)) \notag \\
&\quad+ \sum_{j = r-i}^{r-1} (-1)^{r-1-j}a_{i,j} \Gamma (r-j)\xi_j(t)  + iS_{0}(i) y(t)\notag\\
&=f(\oT(y,\hdots,y^{(r-1)})(t))  -iS^i(t) + iS_{0}(i) y(t)\notag\\
&\quad +\!\!\! \!\! \sum_{j = r-i+1}^{r-1}\!\!\!\! (-1)^{r-j}(i^{r-j} y^{(j)}(t)\! - \!(a_{i,j-1}\!+\!a_{i,j}(r-j) )\Gamma \xi_{j}(t)) \notag\\
&\quad+ (-1)^{i-1} i a_{i,r-i} \Gamma \xi_{r-i}(t)+ (-i) \sum_{j=i}^{r-1} (-i)^{j-1} y^{(r-j)}(t) \notag  \\
&\overset{\eqref{eq:aij}}{=} f(\oT(y,\hdots,y^{(r-1)})(t))  -iS^{i}(t) + iS_{0}(i) y(t)\notag\\
&\quad+ (-i) \sum_{j=i}^{r-1} (-i)^{j-1} y^{(r-j)}(t)+ (-1)^{i-1} i a_{i,r-i} \Gamma \xi_{r-i}(t)\notag \\
&\quad + \sum_{j = r-i+1}^{r-1} (-1)^{r-j}(i^{r-j} y^{(j)}(t) - i a_{i,j}\Gamma \xi_{j}(t)) \notag\\
&=  f(\oT(y,\hdots,y^{(r-1)})(t)) + iS_{0}(i) y(t)\notag \\ 
&\quad+ (-1)^{i-1} i a_{i,r-i} \Gamma \xi_{r-i}(t) \notag \\
&\quad + (-i) \Bigg(  S^{i}(t) +  \sum_{j=i}^{r-1} (-i)^{j-1} y^{(r-j)}(t)\notag\\
&\quad\quad+ \sum_{j = r-i+1}^{r-1} (-1)^{r-j-1}(i^{r-j-1} y^{(j)}(t) - a_{i,j}\Gamma \xi_{j}(t)) \Bigg)\notag\\
&\overset{\eqref{def:zeta-i}}{=}  f(\oT(y,\hdots,y^{(r-1)})(t))  + (-1)^{i-1} i a_{i,r-i} \Gamma \xi_{r-i}(t) \notag\\ 
&\quad + (-i) \Big(\zeta_i(t) -  (-1)^{i-1}(i^{i-1} y^{(r-i)}(t) - a_{i,r-i} \Gamma \xi_{r-i}(t)) \notag\\
&\quad- (-i)^{r-1} y(t) + (-i)^{i-1} y^{(r-i)}(t)\Big)+ iS_{0}(i) y(t) \notag\\
&=  f(\oT(y,\hdots,y^{(r-1)})(t)) + iS_{0}(i) y(t)  \notag \\
&\quad+ (-i) \zeta_i(t)  - (-i)^{r} y(t). \label{diff-eq-zeta}
\end{align}
Using the fact that $y$ is bounded along the maximal solution due to the boundness of the tracking error and the reference signal, and by the BIBO property of the operator $\oT$ and continuity of $f$, we can conclude that $f(\oT(y,\hdots,y^{(r-1)})(t))$ is bounded along the trajectory of the maximal solution, therefore $\zeta_i$ remains bounded by a Gronwall argument. 

\emph{Step 2b:} We show that $\dot{y} - \Gamma \xi_1$ can be expressed as a linear combination of bounded terms. Define the matrix
\begin{equation} \label{def:Matrix}
M \coloneq \begin{pmatrix} 1 & 1 & \hdots & 1 \\ 1 & 2 & \hdots & r-1 \\ 1 & 2^2 & \hdots & (r-1)^2 \\ \vdots & \vdots & \ddots & \vdots \\ 1 & 2^{(r-3)} & \hdots & (r-1)^{(r-3)} \end{pmatrix} \in \R^{(r-2) \times (r-1)}.
\end{equation}
Recall that for any pairwise distinct $\alpha_1,\ldots,\alpha_r \in \R$, the Vandermonde matrix
\begin{equation}
V(\alpha_1,\hdots, \alpha_{r}) \coloneq \begin{pmatrix} 1 & \alpha_1 & \alpha_1^2 & \hdots & \alpha_1^{r-1} \\ 1 & \alpha_2 & \alpha_2^2 & \hdots & \alpha_2^{r-1} \\ \vdots & \vdots & \vdots & \ddots & \vdots \\ 1 & \alpha_{r} & \alpha_r^2 & \hdots & \alpha_r^{r-1} \\ \end{pmatrix}
\end{equation}
is invertible. From this, we can conclude that $\dim \operatorname{ker} M = 1$, and moreover, by choosing an arbitrary $0 \neq c = \begin{pmatrix} c_1 \\ \vdots \\ c_{r-1} \end{pmatrix} \in \operatorname{ker} M$, we obtain 
\begin{equation} \label{eq:c-ker}
\sum_{i=1}^{r-1} c_i i^k = 0, \quad \fa 0 \leq k \leq r-3  , \quad  \sum_{i=1}^{r-1} c_i i^{r-2} \neq 0.
\end{equation}
Now, define $Z(t) = \sum_{i = 1}^{r-1} c_i \zeta_i(t)$ for $t\in [t_0,\omega)$. Using the representation of $\zeta_i$ from~\eqref{eq:zeta-i-poly}, we can express this as 
\begin{equation}
\begin{aligned}
Z(t) &= \sum_{j = 0}^{r-1} \left( \sum_{i=1}^{r-1} c_i \alpha_j(i)  \right) y^{(j)}(t) + \sum_{j=1}^{r-1} \left( \sum_{i=1}^{r-1} c_i \beta_j(i) \right) \Gamma \xi_j(t)\\
&= \sum_{j = 0}^{r-1} A_j y^{(j)}(t) + \sum_{j = 1}^{r-1} B_j \Gamma \xi_j(t),
\end{aligned}
\end{equation}
where the matrices $A_j = \sum_{i=1}^{r-1} c_i \alpha_j(i) = 0 $ for each $j \geq 2$ due to property~\eqref{eq:c-ker} and the fact that the $\alpha_j(s)$ are polynomials of degree $r-1-j \leq r-3$. Similarly, we obtain time-independent coefficients $B_j = \sum_{i=1}^{r-1} c_i \beta_j(i) = 0$ for each $j \geq 2$, again by property~\eqref{eq:c-ker} and the fact that the $\beta_j(s)$ are polynomials of degree $r-1-j \leq r-3$.

However, calculating the remaining coefficients shows that
\begin{align*} 
A_1 &= \sum_{i = 1}^{r-1} c_i\alpha_1(i) = \sum_{i = 1}^{r-1} c_i ((-i)^{r-2} I + S_{1}(i)) \\
&= \sum_{i = 1}^{r-1} c_i (-i)^{r-2} I \neq 0 ,
\end{align*}
since the degree of $S_{1}(s)$ is not larger than $r-3$ and by~\eqref{eq:c-ker}, and similarly
$$ B_1 = \sum_{i = 1}^{r-1} c_i \beta_1(i) = \sum_{i=1}^{r-1} c_i (-1)^{r-1} i^{r-2} =:- q \neq 0.$$
This shows that
\begin{equation}
Z(t) = q(\dot{y}(t) - \Gamma \xi_1(t)) + A_0 y(t).
\end{equation}
Since $Z(t) $ is bounded on $[t_0,\omega)$ as a linear combination of $\zeta_1(t),\ldots,\zeta_{r-1}(t)$, which are bounded by Step~2a, we can conclude that $$ \dot{y}(t) - \Gamma \xi_1(t) = \frac{1}{q} (Z(t) - A_0 y(t)) $$
remains bounded on $[t_0,\omega)$. 

\emph{Step 3:} Construct a family $\left(Z_k(t)\right)_{1 \leq k \leq r-1}$ of bounded functions on $[t_0,\omega)$. Define $Z_1(t) \coloneq Z(t)$ as in the previous step. Fix $2 \leq k \leq r-1$. Choose $c^{(k)} \in \R^{r-1}$ such that 
\begin{equation*}
c^{(k)} = \begin{pmatrix} c_1^{(k)} \\ \vdots \\ c_{r-1}^{(k)} \end{pmatrix} \in \operatorname{ker}  \begin{pmatrix} 1 & 1 & \hdots & 1 \\ 1 & 2 & \hdots & r-1 \\ 1 & 2^2 & \hdots & (r-1)^2 \\ \vdots & \vdots & \ddots & \vdots \\ 1 & 2^{(r-2-k)} & \hdots & (r-1)^{(r-2-k)} \end{pmatrix}, 
\end{equation*}
and $\sum_{i=1}^{r-1} c_i^{(k)} i^{r-1-k} =: q_k \neq 0.$ A vector $c^{(k)}$ with these properties exists by the invertibility of the Vandermonde matrix.
Now, define $Z_k(t) = \sum_{i = 1}^{r-1} c_i^{(k)} \zeta_i(t)$ for $t\in[t_0,\omega)$. This is bounded, since all $\zeta_i$ are bounded by Step~2. Again, we can express
\begin{equation}
\begin{aligned}
Z_k(t)\! &= \!\sum_{j = 0}^{r-1}\! \left( \sum_{i=1}^{r-1} c_i^{(k)} \alpha_j(i)  \right) \!y^{(j)}(t) \!+\! \sum_{j=1}^{r-1} \!\left( \sum_{i=1}^{r-1} c_i^{(k)} \beta_j(i)\!\! \right)\! \Gamma \xi_j(t)\\
&= \sum_{j = 0}^{r-1} A_j^{(k)} y^{(j)}(t) + \sum_{j = 1}^{r-1} B_j^{(k)} \Gamma \xi_j(t),
\end{aligned}
\end{equation}
where $A_j^{(k)} = \sum_{i=1}^{r-1} c_i^{(k)} \alpha_j(i) = 0 $ for each $j \geq k+1$ due to the choice of $c^{(k)}$ and the fact that the $\alpha_j(s)$ are polynomials of degree $r-1-j \leq r-2-k$. Similarly, we obtain $B_j^{(k)} = \sum_{i=1}^{r-1} c_i^{(k)} \beta_j(i) = 0$ for each $j \geq k+1$, again by the choice of $c^{(k)}$ and the fact that the $\beta_j(s)$ are polynomials of degree $r-1-j \leq r-2-k$.

However, calculating the coefficient for $j=k$ shows that 
\begin{align*}
A_k^{(k)} &= \sum_{i = 1}^{r-1} c_i^{(k)} \alpha_k(i) = \sum_{i = 1}^{r-1} c_i^{(k)} ((-i)^{r-1-k} I + S_{k}(i)) \\
&= \sum_{i = 1}^{r-1} c_i^{(k)} (-i)^{r-1-k} I = (-1)^{r-1-k} q_k I \neq 0 , 
\end{align*}
since the degree of $S_{k}(s)$ is not larger than $r-1-k$, and similarly
$$ B_k^{(k)}\!\! = \!\sum_{i = 1}^{r-1} c_i^{(k)} \beta_k(i)\! = \!\sum_{i=1}^{r-1} c_i^{(k)} (-1)^{r-k} i^{r-1-k}\! = \!(-1)^{r-k} q_k \!\neq\! 0.$$
This shows that 
\begin{equation} \label{eq:Z_k}
Z_k(t) = \sum_{j=0}^{k} A_j^{(k)} y^{(j)}(t) + \sum_{j=1}^{k} B_j^{(k)} \Gamma \xi_j(t)
\end{equation}
for all $t\in [t_0,\omega)$, for some matrix $A_k^{(k)} = (-1)^{r-1-k} q_k I$ that is invertible. 

\emph{Step 4:} We show that $\varphi(t)\Norm{e(t)} \leq \varepsilon$ for all $t \in [t_0,\omega)$, for some $\varepsilon \in (0,1).$ Choose $\varepsilon \in (0,1)$ such that $\varphi(t_0) \Norm{e(t_0)} < \varepsilon$, and 
\begin{equation} \label{def-eps}
\begin{aligned}
 \frac{\varepsilon^2}{1-\varepsilon^2} &> \frac{1}{\lambda_{\text{min}}(\frac{1}{2}(\Gamma + \Gamma^{\top}))} \Bigg( \Vert \dot{y}_{\text{ref}} \Vert_{\infty} \Vert \varphi \Vert_{\infty}  + \left\Vert \frac{\dot{\varphi}}{\varphi} \right\Vert_{\infty} \\
&\quad\quad+\Vert \varphi \Vert_{\infty} (\lambda_{\text{max}}(\Gamma)\hat{\theta}_1 + \Vert \dot{y} - \Gamma\xi_1 \Vert_{\infty}) \Bigg) =: \sigma,
\end{aligned}
\end{equation}
where $\lambda_{\text{min}}(\frac{1}{2}(\Gamma + \Gamma^{\top})) > 0$ is the smallest eigenvalue of the positive definite matrix $\frac{1}{2}(\Gamma + \Gamma^{\top})$, and $\lambda_{\text{max}}(\Gamma)>0$ is the absolute value of the largest eigenvalue of the matrix $\Gamma$, and $\sigma$ is finite due to Step~2 and the assumptions on $\varphi$ and $y_{\text{ref}}$. Seeking a contradiction, assume that there exists $s_1 \in [t_0,\omega)$ such that $\varphi(s_1) \Norm{e(s_1)} > \varepsilon$. Since $\varphi(t_0)\Norm{e(t_0)}<\varepsilon$ by choice of $\varepsilon$, there exists $s_0 \coloneq \sup \{ t \in [t_0,s_1] \mid \varphi(t)\Norm{e(t)} = \varepsilon \}$. 

We will use the fact that
\begin{equation} \label{eq:doty}
\begin{aligned}
\dot{y}(t) &=\Gamma\xi_1(t) + \dot{y}(t) -\Gamma \xi_1(t) \\
&= \Gamma\theta_1(t) + \Gamma\frac{-e(t)}{1-\varphi(t)^2\Norm{e(t)}^2} + \dot{y}(t) - \Gamma\xi_1(t)
\end{aligned}
\end{equation}
holds for all $t\in [t_0,\omega)$.
Observe that for all $s \in [s_0,s_1]$, the following estimate holds:

\begin{align*}
&\tfrac{1}{2} \dds \varphi(s)^2\! \Norm{e(s)}^2 = \varphi(s)^2 \langle e(s), \dot{e}(s) \rangle + \varphi(s) \dot{\varphi}(s) \Norm{e(s)}^2 \\
&\qquad \leq  \varphi(s)^2 \langle e(s) , \dot{y}(s) \rangle \!+\! \Vert \dot{y}_{\text{ref}} \Vert_{\infty} \Vert \varphi \Vert_{\infty}  \!+\! \left\Vert \frac{\dot{\varphi}}{\varphi} \right\Vert_{\infty} \\
&\qquad\overset{\eqref{eq:doty}}{=}  \varphi(s)^2 \left\langle e(s), \frac{-\Gamma e(s)}{1-\varphi(s)^2\Norm{e(s)}^2} \right\rangle \\
&\qquad\quad+ \varphi(s)^2 \langle e(s), \Gamma\theta_1(s) + \dot{y}(s) - \Gamma\xi_1(s) \rangle \\
&\qquad\quad + \Vert \dot{y}_{\text{ref}} \Vert_{\infty} \Vert \varphi \Vert_{\infty}  + \left\Vert \frac{\dot{\varphi}}{\varphi} \right\Vert_{\infty} \\
&\qquad\leq -\frac{\lambda_{\text{min}}(\Gamma)\varepsilon^2}{1-\varepsilon^2} +\sigma \overset{\eqref{def-eps}}{<}0.
\end{align*} 
This implies that $\varepsilon=\varphi(s_0)\Norm{e(s_0)} > \varphi(s_1)\Norm{e(s_1)} > \varepsilon$, a contradiction. Hence, the claim of Step~4 is shown. 

\emph{Step 5:} We show boundness of $u$, $\xi_i$ and $y^{(i)}$ for each $1 \leq i \leq r-1$. By Step~4, $\varphi(t)\Norm{e(t)} < \varepsilon < 1$, hence $\frac{1}{1 - \varphi(t)^2\Norm{e(t)}^2}$ is bounded. Moreover, since $\Vert \theta_1\Vert_{\infty} \leq \hat{\theta}_1$, it follows that $\xi_1(t) = \theta_1(t) + \frac{-e(t)}{1-\varphi(t)^2\Norm{e(t)}^2}$ is bounded. Further, since $\dot{y} - \Gamma \xi_1$ is bounded by Step~2, we can conclude that $\dot{y}$ is bounded. Choose $b_0 \coloneq \sup_{t \in[t_0,\omega)} \Norm{\ddt \frac{-e(t)}{1-\varphi(t)^2\Norm{e(t)}^2}}$, which is bounded due to Step~4 and the boundness of $\dot{y}$ yielding that $\dot{e}$ is bounded. Observe that this shows that $\dot{\theta}_1$ is bounded. Choose some $\frac{\Vert \theta_1(t_0) \Vert}{\hat{\theta}_1} < \tilde{\varepsilon}_1 < 1$ such that
$$ \hat{\theta}_1 ((r-1) \Vert \xi_1 \Vert_{\infty} + b_{0} + \hat{\theta}_{2}) < \frac{\tilde{\varepsilon}_1^2}{1-\tilde{\varepsilon}_1^2}.$$
Seeking a contradiction, assume that there exists $s_1\in [t_0,\omega)$ such that  $\Vert \theta_1(s_1) \Vert > \tilde{\varepsilon}_1 \hat{\theta}_1$ holds. Due to the condition on the initial condition, there exists $s_0 \coloneq \sup \{ s \in [t_0,s_1] \mid \Vert \theta_1(s) \Vert = \tilde{\varepsilon}_1 \hat{\theta}_1 \}$. For each $s \in [s_0,s_1]$, the estimate
\begin{align*}
& \tfrac{1}{2} \dds \Norm{\theta_1(s)}^2 = \left\langle \theta_1(s), \dot{\xi}_1(s) - \dds \frac{-e(s) }{1 - \varphi(s)^2\Norm{e(s)}^2} \right\rangle \\ 
&\qquad\leq \langle  \theta_1(s), -(r-1)\xi_1(s) + \xi_{2}(s) \rangle + \Norm{\theta_1} b_{0}\\
&\qquad\leq \Norm{\theta_1} ((r-1) \Vert \xi_1 \Vert_{\infty} + b_{0})\\
&\qquad\quad + \left\langle \theta_1(s), \theta_{2}(s) + \frac{-\theta_1(s)}{\hat{\theta}_1^2 - \Norm{\theta_1(s)}^2}\right\rangle \\
&\qquad\leq \hat\theta_1 ((r-1) \Vert \xi_1 \Vert_{\infty}\! +\! b_{0} \!+\! \hat{\theta}_{2}) + \frac{-\Norm{\theta_1(s)}^2}{\hat{\theta}_1^2 - \Norm{\theta_1(s)}^2}\\
&\qquad\leq \hat{\theta}_1 ((r-1) \Vert \xi_1 \Vert_{\infty} + b_{0} + \hat{\theta}_{2}) - \frac{\tilde{\varepsilon}_1^2}{1-\tilde{\varepsilon}_1^2} < 0
\end{align*}
holds. By integration, we obtain
$$ \tilde{\varepsilon}_1 \hat{\theta}_1< \theta_1(s_1) < \theta_1(s_0) = \tilde{\varepsilon}_1 \hat{\theta}_1 ,$$
a contradiction. We can conclude that $\Vert \theta_1(s) \Vert \leq \tilde{\varepsilon}_1 \hat{\theta}_1$ for all $s \in [t_0,\omega)$.

We continue to show the boundness of $y^{(k)}, \frac{1}{\hat{\theta}_k^2 - \Norm{\theta_k}^2}, \dot{\theta}_{k-1}$ and $\xi_k$ inductively. Fix $2 \leq k \leq r-1$. Assume that all $\xi_{i}, y^{(i)}$, and $\frac{1}{\hat{\theta}_{i}^2 - \Norm{\theta_i}^2}$ for $1 \leq i < k$, and $\dot{\theta}_{i-1}$ for $2\leq i < k$ are bounded. We show that this implies that $\xi_{k}, \frac{1}{\hat{\theta}_k^2 - \Norm{\theta_k}^2}, \dot{\theta}_{k-1}$ and $y^{(k)} $ are bounded. 

Observe that $\xi_{k}(t) = \theta_{k}(t) + \frac{-\theta_{k-1}(t)}{\hat{\theta}_{k-1}^2 - \Norm{\theta_{k-1}(t)}^2}$ is bounded by the induction hypothesis. Further, we can use~\eqref{eq:Z_k} to conclude that
\begin{equation*}
y^{(k)}\!(t) \!=\! \frac{1}{(-1)^{r-1-k} q_k }\! \!\left( \!Z_{k}(t)\! -\! \!\sum_{j=1}^{k} B_j^{(k)} \Gamma \xi_j(t)\! -\!\! \sum_{j=0}^{k-1} A_j^{(k)} y^{(j)}(t)\! \right)\!\! ,
\end{equation*}
which is bounded, since $Z_{k} $ is bounded by Step~3, and the functions $y,\hdots,y^{(k-1)}$ and $\xi_1,\hdots,\xi_{k-1}$ are bounded by the induction hypothesis, and $\xi_{k}$ remains bounded by the previous argument. Observe that $\dot{\theta}_{k-1}(t) = \dot{\xi}_{k-1}(t) - \ddt \frac{-\theta_{k-2}(t)}{\hat{\theta}^2_{k-2} - \Norm{\theta_{k-2}(t)}^2}$ is bounded, since $\xi_{k}, \xi_{k-1}, \dot{\theta}_{k-2}$ and $\frac{1}{\hat{\theta}^2_{k-2} - \Norm{\theta_{k-2}(t)}^2}$ are bounded by the induction hypothesis. Define $b_{k-1} \coloneq \sup_{t \in [t_0,\omega)} \Norm{\ddt \frac{-\theta_{k-1}(t)}{\hat{\theta}_{k-1}^2 - \Norm{\theta_{k-1}(t)}^2}}$, which is finite by the induction hypothesis and the previously observed boundness of $\dot{\theta}_{k-1}$. Choose some $\frac{\Vert \theta_k(t_0) \Vert}{\hat{\theta}_k} < \tilde{\varepsilon}_k < 1$ such that
$$ \hat{\theta}_k ((r-k) \Vert \xi_k \Vert_{\infty} + b_{k-1} + \hat{\theta}_{k+1}) < \frac{\tilde{\varepsilon}_k^2}{1-\tilde{\varepsilon}_k^2}.$$
Seeking a contradiction, assume that there exists $s_1\in [t_0,\omega)$ such that  $\Vert \theta_k(s_1) \Vert > \tilde{\varepsilon}_k \hat{\theta}_k$ holds. Due to the condition on the initial condition, there exists $s_0 \coloneq \sup \{ s \in [t_0,s_1] \mid \Vert \theta_k(s) \Vert = \tilde{\varepsilon}_k \hat{\theta}_k \}$. For each $s \in [s_0,s_1]$, the estimate
\begin{align*}
&\tfrac{1}{2} \dds \Norm{\theta_k(s)}^2 = \left\langle \theta_k(s), \dot{\xi}_k(s) - \dds \frac{-\theta_{k-1}(s)}{\hat{\theta}_{k-1}^2 - \Norm{\theta_{k-1}(s)}^2} \right\rangle \\ 
&\qquad\leq \langle  \theta_k(s), -(r-k)\xi_k(s) + \xi_{k+1}(s) \rangle + \Norm{\theta_k} b_{k-1}\\
&\qquad\leq \Norm{\theta_k} ((r-k) \Vert \xi_k \Vert_{\infty} + b_{k-1}) \\
&\qquad\quad+ \left\langle \theta_k(s), \theta_{k+1}(s) + \frac{-\theta_k(s)}{\hat{\theta}_k^2 - \Norm{\theta_k(s)}^2}\right\rangle \\
&\qquad\leq \hat\theta_k ((r-k) \Vert \xi_k \Vert_{\infty}\! + \!b_{k-1}\! +\! \hat{\theta}_{k+1}) \!+\! \frac{-\Norm{\theta_k(s)}^2}{\hat{\theta}_k^2 - \Norm{\theta_k(s)}^2}\\
&\qquad\leq \hat{\theta}_k ((r-k) \Vert \xi_k \Vert_{\infty} \!+\! b_{k-1} \!+\! \hat{\theta}_{k+1}) \!-\! \frac{\tilde{\varepsilon}_k^2}{1-\tilde{\varepsilon}_k^2}\! < \!0
\end{align*}
holds. By integration, we obtain
$$ \tilde{\varepsilon}_k \hat{\theta}_k< \theta_k(s_1) < \theta_k(s_0) = \tilde{\varepsilon}_k \hat{\theta}_k ,$$
a contradiction. We can conclude that $\Vert \theta_k(s) \Vert \leq \tilde{\varepsilon}_k \hat{\theta}_k$ for all $s \in [t_0,\omega)$, finishing the induction. 

It remains to show boundness of $u$, which follows from the existence of $\tilde{\varepsilon}_{r-1} < 1$ satisfying $\Vert \theta_{r-1} \Vert \leq \tilde{\varepsilon}_{r-1} \hat{\theta}_{r-1}$, hence $\frac{1}{\hat{\theta}_{r-1}^2 - \Norm{\theta_{r-1}(t)}^2}$ is bounded, showing the claim of Step~5.

\emph{Step 6:} We show that $\omega = \infty$. Assume that $\omega < \infty$. By Step~4 and the existence of $\tilde{\varepsilon}_1, \hdots, \tilde{\varepsilon}_{r-1} < 1$ as well as boundednes of $\xi_1,\ldots,\xi_{r-1}$ shown in Step~5, the closure of the graph of the maximal solution is a compact subset of $\cE$, contradicting Step~1. Therefore, $\omega = \infty$, showing the existence of a global solution.
\end{proof}

\begin{remark} 
Note that the filter-based contoller~\eqref{eq:Controller} uses only $r-1$ dynamic variables $\xi_1,\ldots,\xi_{r-1}$ and no further assumptions on the gain matrix $\Gamma$ are needed, besides positive definiteness of its symmetric part. In contrast, the funnel controller using a cascade of pre-compensators from~\cite{BergerReis18,Lanz22} introduces $r(r-1)$ dynamic variables and requires knowledge of an estimate close enough to~$\Gamma$.
\end{remark}

\begin{remark}
For the choice of the tuning parameters $\hat{\theta}_i$ and the initial values $\xi_i^0$ of the filter variables, note that, although smaller values of $\hat{\theta}_i$ may improve controller performance, decreasing this parameter requires a precise knowledge of the initial output $y^0(t_0)$ in order to guarantee the existence of a consistent choice of initial values $\xi_i^0$.
\end{remark}

\section{Simulations} \label{sec:Simulations}

To illustrate the main result of this work, we present a numerical simulation of the behavior of the closed-loop system~\eqref{eq:System},\eqref{eq:Controller}, conducted using MATLAB (solver: \texttt{ode15s}, rel.tol.: $10^{-8}$, abs.tol.: $10^{-6}$).

\begin{figure} 
\includegraphics[width = \columnwidth]{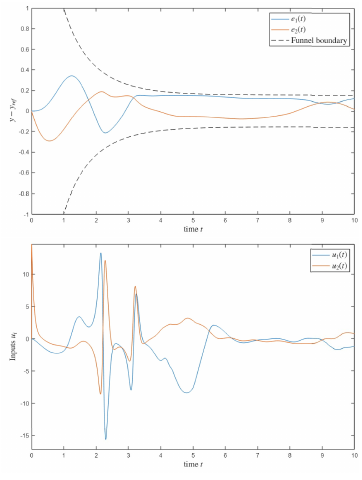}
\caption{Simulation of the closed loop system~\eqref{eq:Controller},\eqref{NonlinSys}.}
\label{Fig:Nonlinear}
\end{figure}

In order to compare the performance to the controller proposed by~\cite{BergerReis18}, we consider the nonlinear system introduced in~\cite[Ex.~5.2]{Lanz22} that was examined with the control law introduced in~\cite{BergerReis18}, given by~\eqref{eq:System} with $r=3$, $t_0=0$ and
\begin{equation} \label{NonlinSys}
\begin{aligned}
R_1 &= \begin{bmatrix} -1 & 0 \\ 0 & 0 \end{bmatrix}, \quad R_2 = \begin{bmatrix} 1 & -1 \\ 0 & 0 \end{bmatrix} \\
R_3 &= \begin{bmatrix} 1 & 1 \\ 0 & -1 \end{bmatrix},\quad \Gamma = \begin{bmatrix} 2 & 0.2 \\ 0.2 & 2 \end{bmatrix},\\
d &: \R_{\geq 0} \rightarrow \R^2, t \mapsto \begin{pmatrix} 0.2\sin(5t)+0.2\cos(7t) \\ 0.25\sin(9t) + 0.2\cos(3t) \end{pmatrix}, \\
\oT &: \cC(\R_{\ge 0}, \R^2)^3 \rightarrow L_{\text{loc}}^{\infty}(\R_{\geq 0},\R^5),\\
& (\xi_1(\cdot),\xi_2(\cdot),\xi_3(\cdot)) \mapsto\\
&\quad \left(  t \mapsto  \begin{pmatrix} d_1(t)\\ d_2(t)\\ \xi_{1,1}(t)^2 + e^{\xi_{1,1}(t)-\vert \xi_{2,1}(t) \vert} \\ \xi_{1,2}(t)^3 - \sin(\xi_{2,2}(t)) \\ \int_0^t  e^{-(t-s)}\Norm{\xi_1(t)}^2 \tanh(\Norm{\xi_3(t)}^2) {\rm d}s  \end{pmatrix}  \right), \\
\text{where } & \xi_i = (\xi_{i,1},\xi_{i,2})^{\top} \text{ for } 1 \leq i \leq 3, \text{ and }\\
f &: \R^5 \rightarrow \R^2, (z_1,z_2,z_3,z_4,z_5) \! \mapsto \! \begin{pmatrix} z_1 + z_3 + z_5^3 \\ z_2 + z_4 - z_5 \end{pmatrix}\!.
\end{aligned}
\end{equation}
We choose the reference trajectory $y_{\text{ref}}(t) = (e^{-(t-5)^2}, \sin(t))^{\top}$ and initial values are $y^0 = \dot{y}^0 = \ddot{y}^0 = 0$. Note that the external disturbance $d(t)$ can be incorporated in the model~\eqref{eq:System} via the operator $\oT$. 

The controller design parameters for~\eqref{eq:Controller} are chosen as $\vartheta = 1$, $\hat{\theta}_1 = 0.25$ and $\hat{\theta}_2 = 0.01$, and the filter variables are initialized by $\xi_1^0 = \xi_2^0 = 0$. The funnel function $\phi(t) = (2.1(e^{-3t}+0.05) + 2e^{-t}+0.05)^{-1} $ is chosen such that it ensures the same transient behavior of the error $y-y_{\text{ref}}$ as in~\cite{Lanz22}. Observe that this yields $\Norm{\theta_1^0} = \Norm{\frac{-y_{\text{ref}}(0)}{1-\phi(0)^2\Norm{y_{\text{ref}}(0)}^2}} < 10^{-10} < \hat{\theta}_1$ and $\Norm{\theta_2^0} = \Norm{\frac{\theta_1^0}{\hat{\theta}}_1^2 - \Norm{\theta_1^0}^2} < 10^{-8} < \hat{\theta}_2$. The simulation results are depicted in Figure~\ref{Fig:Nonlinear}. 

Compared to the performance of the funnel controller using a precompensator shown in~\cite{Lanz22}, we observe that the results exhibit a moderately higher bandwidth, while maintaining comparable transient performance. Moreover, we note that the tuning process includes fewer parameters and the controller requires the solution of fewer additional differential equations, in comparison to the funnel controller equipped with a cascade of precompensators.

\section{Conclusion} \label{sec:Conclusion}

In this work, we proposed a derivative-free extension of funnel control for nonlinear multi-input multi-output systems with arbitrary relative degree. By employing a collection of filter variables as surrogates for the unavailable output derivatives, the resulting controller avoids the differentiation of the output signal while being of low complexity. The tracking error evolves within the prescribed performance funnel, and all signals remain bounded. Compared to earlier works based on the funnel pre-compensator, the new approach involves much less dynamic variables and tuning parameters and avoids additional assumptions on the input gain matrix~$\Gamma$.

The proposed approach remains sensitive to measurement noise and discretization effects, as transient violations of the funnel constraint cannot be excluded. This is inherent to funnel control and not specific to the proposed extension. Future work may address the integration of the novel approach with other funnel-based control schemes, as well as the extension to more complex settings such as networked systems.

\bibliographystyle{elsarticle-harv}
\bibliography{lit}
\end{document}

%% file: commands.tex
\newcommand{\nl}{\left\|}
\newcommand{\nr}{\right\|}

\newcommand{\Norm}[2][ ]{\nl #2 \nr_{#1}}

\newcommand{\N}{\mathds{N}}

\newcommand{\R}{\mathds{R}}
\newcommand{\Rp}{\R_{\geq0}}

\renewcommand{\phi}{\varphi}

\newcommand{\cC}{\mathcal{C}}

\newcommand{\cE}{\mathcal{E}}
\newcommand{\cF}{\mathcal{F}}

\newcommand{\oT}{\mathbf{T}}

\newcommand{\Impl}{\Longrightarrow}

\newcommand{\fa}{\forall \, }
\newcommand{\ex}{\exists \, }

\makeatletter
\newcommand{\Itemlabel}[2]{#2\def\@currentlabel{#2}\label{#1}}
\makeatother

\makeatletter
\newcommand\setcurrentname[1]{\def\@currentlabelname{#1}}
\makeatother

\DeclareMathOperator*{\esssup}{ess\,sup}

\DeclareMathOperator*{\loc}{loc}

\newcommand{\nocontentsline}[3]{}
\newcommand{\tocless}[2]{\bgroup\let\addcontentsline=\nocontentsline#1{#2}\egroup}
\newcommand{\toclesslab}[3]{\bgroup\let\addcontentsline=\nocontentsline#1{#2\label{#3}}\egroup} 
